\theoremstyle{plain}
\numberwithin{equation}{section}
\newtheorem{theorem}{Theorem}[section]
\newtheorem{proposition}[theorem]{Proposition}
\theoremstyle{remark}
\newtheorem{remark}[theorem]{\bf Remark}
\newcommand{\R}{{\mathbf R}}
\newcommand{\s}{{\mathcal S}}
\begin{document}

\title{The Fourier transform of multiradial functions}

\author{Fr\'ed\'eric Bernicot}
\address{CNRS - Universit\'e de Nantes, Laboratoire Jean Leray  2, rue de la Houssini\`ere,
44322 Nantes cedex 3, France }
\email{frederic.bernicot@univ-nantes.fr}

\author{Loukas Grafakos}
\address{Department of Mathematics, University of Missouri, Columbia, MO 65211, USA}
\email{grafakosl@missouri.edu}

\author{Yandan Zhang}
\address{Department of Mathematics, Zhejiang University of Science and Technology, Hangzhou,
Zhejiang, China}
\email{yandzhang@163.com}

\day=25 \month=05 \year=2013
\date{\today}

 \subjclass[2000]{42B10, 42B37}
  \keywords{Multiradial function, Fourier transform.}
 \thanks{The first author is partially supported by supported by the ANR under the project AFoMEN no. 2011-JS01-001-01. The second author was supported by grant DMS 0900946 of the National Science Foundation of the USA. The
third author was supported by the National Natural Science Foundation of China (Grant No. 11226108 and No. 11171306).}

\begin{abstract}
We obtain an exact formula for the
Fourier transform of multiradial functions, i.e., functions of the form
$\Phi(x)=\phi(|x_1|, \dots , |x_m|)$, $x_i\in \mathbf R^{n_i}$, in terms of the Fourier transform of the function $\phi$ on $\mathbf
R^{r_1}\times \cdots \times \mathbf R^{r_m}$,
where $r_i$ is either $1$ or $2$.
\end{abstract}

\maketitle

\section{Introduction}
~~~~~Let $m\geq1$, $n_1,\dots,n_m\geq1$ be integers.
Throughout this note, we will adhere to the following notation for the Fourier transform of a function $\Phi$ in $L^1(\mathbf{R}^{n_1+\cdots+n_m})$
$$
 F_{n_1,\dots,n_m}(\Phi)(\xi_1,\dots,\xi_m)
   =\int_{\mathbf{R}^{{n_m}}}\cdots\int_{\mathbf{R}^{{n_1}}}
\Phi(x_1,\dots,x_m)e^{-2\pi i(x_1\cdot\xi_1+\cdots+x_m\cdot\xi_m)}dx_1\cdots dx_m.
$$ 
The function $\Phi$ is called multiradial if there exists some  function $\phi$ on $(\mathbf{R}^+\cup\{0\})^m$ such that
\begin{equation}\label{1}
\Phi(x_1,\dots,x_m)=\phi(| x_1|,\dots,| x_m|)
\end{equation}
 for all $x_i\in \mathbf R^{n_i}$, where $|x_j|$ denotes the Euclidean norm of $x_j$.
  In the case $m=1$, $\Phi$ is simply called radial.
 Obviously, if $\Phi$ is multiradial, so is its Fourier transform, which only depends on $\phi$. Thus it is appropriate to use the notation
$$
\mathcal{F}_{n_1,\dots,n_m}(\phi)(r_1,\dots,r_m):=F_{n_1,\dots,n_m}(\Phi)(\xi_1,\dots,\xi_m),
$$
 where $r_1=|\xi_1|,\dots,r_m=|\xi_m|$, for the Fourier transform of a multiradial function $\Phi$ on $\mathbf R^{n_1+\cdots +n_m}$.

There exists an obvious identification between functions $\phi$ on $[0,\infty)^m$ and multi-even functions (functions that are even with respect to each of their variables) on $\mathbf{R}^m$  given by
$$
\phi_{ext}(t_1, \dots, t_m) = \phi (|t_1|,\dots , |t_m|)\, .
$$
Clearly, the restriction of $\phi_{ext}$ on $[0,\infty)^m$ is $\phi$.
We introduce the notation
$$
\widehat{\phi} := F_{1,\dots, 1}({  \phi_{ext} } ) \, .
$$
 Throughout this paper we denote the multi-even extension $\phi_{ext}$ of   $\phi$  also by $\phi$, and then  $\widehat{\phi} $
provides a shorter notation for $ F_{1,\dots, 1}({  \phi }) $,  which also coincides with $\mathcal F_{1,\dots, 1} (\phi) $ on $[0,\infty)^m$.

In the recent work of Grafakos and Teschl \cite{GT}  an explicit formula for the Fourier transform of a radial function $\Phi(x)=\phi(|x|)$ is given in terms of the one-dimensional Fourier transform of $\phi$ or the two-dimensional Fourier  transform of $(t,s)\mapsto \phi(|(t,s)|)$. In this work we extend this formula to
multiradial functions.  We obtain relatively straightforward formulas that relate  the Fourier transform on $\mathbf R^{m(k+2)}$  with that on $\mathbf R^{mk}$ but also new more complicated ones that relate the Fourier transform  on $\mathbf R^{m(k+1)}$ with that on $\mathbf R^{mk}$; the latter formulas   are valid only in the case of   compactly supported Fourier transforms, i.e., band-limited multiradial signals.

We have the following   results:

\begin{theorem} \label{thm1} Let $m\geq1$ and $k_i\in\mathbf{Z}^+ $ for $i=1,\dots , m$. Suppose that $\Phi$ is related to $\phi$ via \eqref{1} and that $\phi$ satisfies
$$ \int_{[0,\infty)^m} \prod_{j=1}^m (1+r_i)^{2k_j+1} |\phi(r_1,\dots, r_m)| dr <\infty.$$
Then   the following identities are valid:
\begin{align*}
& \mathcal{F}_{2k_1\!+\!1,\dots , 2k_m\!+\!1}(\phi)(r_1,\dots,r_m) \\
&=\frac{1}{(2\pi)^{k_1+\cdots +k_m}}\sum_{\ell_m=1}^{k_m}\frac{(-1)^{\ell_m}(2k_m-\ell_m-1)!}{2^{k_m-\ell_m}(k-\ell_m)!(\ell_m-1)!}
\frac{1}{r_m^{2k_m-\ell_m}} \notag\\
&\quad\cdots\sum_{\ell_1=1}^{k_1}\frac{(-1)^{\ell_1}(2k_1-\ell_1-1)!}{2^{k_1-\ell_1}(k_1-\ell_1)!(\ell_1-1)!}
\frac{1}{r_1^{2k_1-\ell_1}}\frac{\partial^{\ell_1+\cdots+\ell_m}\mathcal{F}_{1,\dots, 1}(\phi)}{\partial r_m^{\ell_m}\cdots\partial r_1^{\ell_1}} (r_1,\dots,r_m)
\end{align*}
and
\begin{align*}
&\mathcal{F}_{2k_1\!+\!2,\dots , 2k_m\!+\!2}(\phi)(r_1,\dots,r_m)\\
&=\frac{1}{(2\pi)^{k_1+\cdots +k_m}}\sum_{\ell_m=1}^{k_m}\frac{(-1)^{\ell_m}(2k_m-\ell_m-1)!}{2^{k_m-\ell_m}(k_m-\ell_m)!(\ell_m-1)!}
\frac{1}{r_m^{2k_m-\ell_m}} \notag\\
&\quad\cdots\sum_{\ell_1=1}^{k_1}\frac{(-1)^{\ell_1}(2{k_1}-\ell_1-1)!}{2^{k_1-\ell_1}(k_1-\ell_1)!(\ell_1-1)!}
\frac{1}{r_1^{2k_1-\ell_1}}\frac{\partial^{\ell_1+\cdots+\ell_m}\mathcal{F}_{2,\dots, 2}(\phi)}{\partial r_m^{\ell_m}\cdots\partial r_1^{\ell_1}}(r_1,\dots,r_m).
\end{align*}
\end{theorem}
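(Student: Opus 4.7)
The plan is to iterate the one-variable formula of Grafakos--Teschl \cite{GT} in each of the $m$ coordinates. Recall that for a radial function $\Phi_0(x)=\phi_0(|x|)$ on $\mathbf{R}^{2k+1}$ with suitable integrability, \cite{GT} gives
$$
\mathcal{F}_{2k+1}(\phi_0)(r)=\frac{1}{(2\pi)^k}\sum_{\ell=1}^{k}\frac{(-1)^{\ell}(2k-\ell-1)!}{2^{k-\ell}(k-\ell)!(\ell-1)!}\frac{1}{r^{2k-\ell}}\frac{\partial^{\ell}\mathcal{F}_{1}(\phi_0)}{\partial r^{\ell}}(r),
$$
and an analogous identity in dimension $2k+2$ with $\mathcal{F}_1$ replaced by $\mathcal{F}_2$. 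Notice that the coefficient structure on the right-hand side of the two identities claimed in Theorem \ref{thm1} is precisely the $m$-fold product of these one-variable coefficients, so the target formula has exactly the shape one would obtain by applying the Grafakos--Teschl identity separately in each of the $m$ coordinate blocks.

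The implementation I would carry out is the following. Fix $x_2,\dots,x_m$ and treat $\Phi(\,\cdot\,,x_2,\dots,x_m)$ as a radial function on $\mathbf{R}^{2k_1+1}$, namely $y\mapsto \phi(|y|,|x_2|,\dots,|x_m|)$. Applying the Grafakos--Teschl formula in the $x_1$-variable expresses the partial Fourier transform in $x_1$ as a linear combination (indexed by $\ell_1=1,\dots,k_1$) of terms of the form $r_1^{-(2k_1-\ell_1)}\partial_{r_1}^{\ell_1}$ of the partial Fourier transform obtained by replacing $n_1$ by $1$. Integrating next in $x_2$, applying the same identity in the slice-radial variable $x_2\in\mathbf{R}^{2k_2+1}$, and proceeding inductively up to $x_m$, produces exactly the iterated sum in the theorem. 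The proof of the second identity is identical, using the even-dimensional version of the Grafakos--Teschl formula in each step, so that the base transform becomes $\mathcal{F}_{2,\dots,2}(\phi)$.

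The one non-trivial point, and what I expect to be the main obstacle, is justifying the interchange of the radial derivatives $\partial_{r_i}^{\ell_i}$ with the remaining Fourier integrals in the variables $x_{i+1},\dots,x_m$ at each stage of the iteration. This is exactly the purpose of the weighted $L^1$-hypothesis
$$
\int_{[0,\infty)^m}\prod_{j=1}^{m}(1+r_j)^{2k_j+1}|\phi(r_1,\dots,r_m)|\,dr<\infty.
$$
Since each application of $\partial_{r_j}^{\ell_j}$ under the Fourier integral brings down a polynomial factor of degree at most $\ell_j\le k_j$ in the integration variable (coming from the exponential phase or the Bessel-type kernel for the even case), the hypothesis produces an integrable majorant uniform in $r_1,\dots,r_m$, so that dominated convergence legitimizes differentiation under the integral sign as well as all applications of Fubini's theorem.

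Once these interchanges are in place, the argument is purely a bookkeeping induction on $m$: at the $i$-th stage one has already reduced the first $i-1$ coordinates to the desired form and the $i$-th step replaces the $(2k_i+1)$-dimensional Fourier transform in $x_i$ (or $(2k_i+2)$-dimensional in the even case) by the one- (resp.\ two-) dimensional one at the cost of the single sum $\sum_{\ell_i=1}^{k_i}$, together with its prefactor $(2\pi)^{-k_i}$. Taking the product of these $m$ single-variable identities yields the two formulas asserted in Theorem \ref{thm1}.
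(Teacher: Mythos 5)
Your proposal is correct and follows essentially the same route as the paper: the paper establishes the recursion $\mathcal{F}_{n+2,\dots,n+2}(\phi)=\prod_{i=1}^m\bigl(-\tfrac{1}{2\pi r_i}\tfrac{\partial}{\partial r_i}\bigr)\mathcal{F}_{n,\dots,n}(\phi)$ directly from the Bessel identity $\tfrac{d}{dt}\widetilde{J}_\nu(t)=-t\widetilde{J}_{\nu+1}(t)$ and then iterates it coordinate by coordinate, which is exactly the Grafakos--Teschl one-variable expansion applied in each block as you describe. The differentiation under the integral sign is justified there, as in your argument, by the weighted $L^1$ hypothesis on $\phi$.
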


\begin{remark} \label{rem} We prove the   identity
$$
 \mathcal{F}_{k_1+2,\dots ,k_m+2}(\phi)(r_1,\dots,r_m)=\frac{(-1)^m}{(2\pi)^m r_1\cdots r_m}\frac{\partial^m \mathcal{F}_{k_1,\dots ,k_m}(\phi)}{\partial r_m\cdots\partial r_1}(r_1,\dots,r_m)
 $$
for every   $k_i\in \mathbf Z\cup\{0\}$ and this can be iterated to give the claimed identities in Theorem \ref{thm1}.
\end{remark}

\begin{remark} The integrability assumption on $\phi$ allows us to consider the function $\Phi$ given by \eqref{1}, and defined on  $\mathbf{R}^{n}$ for any $n$ satisfying $1\le n\le   2(k_1+\cdots +  k_m +m)$. Then $\Phi\in L^1(\mathbf{R}^{n})$. \\
Using the fact the Fourier transform is a unitary operator on $L^2(\mathbf{R}^{n_1+\cdots+n_m})$ and by density, $L^1$-integrability of $\Phi$ in the above theorem can be replaced by $L^2$-integrability. About the associated recursion in Theorem 1.1 for the case of Schwartz functions, we refer the reader to \cite{LT,SP,SW} for related results. One could consider  analogous recursion
formulas for   multiradial distributions; this has been studied in the linear case in \cite{Sz,Z1,Z2}.
\end{remark}

\begin{remark} We have given formulas for the Fourier transform of
$\phi(|x_1|, \dots , |x_m|)$ when either all $x_i$ lie in odd-dimensional spaces or all
$x_i$ lie in even-dimensional spaces in terms of the Fourier transform on $\phi$ on $\mathbf R^m$ or $\mathbf R^{2m}$, respectively. Analogous formulas work for the
Fourier transform of
functions $\phi(|x_1|, \dots , |x_m|)$ where $x_i \in \mathbf R^{n_i}$ in terms of the
 Fourier transform of $\phi(t_1, \dots , t_m)$, where $t_i\in \mathbf R$ when $n_i$ is odd
 and $t_i\in \mathbf R^2$ when $n_i$ is even.
\end{remark}


\begin{theorem} (a) Let $\phi$ be an even function   on a real line whose Fourier transform $\widehat{\phi}$ is supported in the interval $[-A,A]$.  Suppose that  $\Phi$ is related to $\phi$ via \eqref{1} and that for    some $k\in \mathbf Z\cup\{0\}$ we have
$$ \int_{[0,\infty)} (1+r)^{2k+1} |\phi(r)| dr <\infty.$$

If $k=0$, then the following identity is valid:
\begin{equation}\label{TBSoo99}
\mathcal{F}_2(\phi)(r)
=2\int_{r}^{A}(\widehat{\phi}\, )^\prime(w)\frac{dw}{\sqrt{w^2-r^2}}\chi_{[0,A]}(r).
\end{equation}

When   $k\ge 1$ we have
\begin{align*}
\mathcal{F}_{2k+1}(\phi)(r)
=\frac{1}{(2\pi)^k}\sum_{\ell=1}^k\frac{(-1)^{\ell}(2k-\ell-1)!}{2^{k-\ell}(k-\ell)!(\ell-1)!}
\frac{1}{r^{2k-\ell}}\frac{d^{\ell} \widehat{\phi}}{d w^{\ell}}(r)\chi_{(0,A)}(r)
\end{align*}
and
\begin{align}\label{cc}
\mathcal{F}_{2k+2}(\phi)(r)
=\frac{2}{(2\pi)^{k}}\sum_{\ell=1}^{k}\frac{(-1)^{\ell}(2k-\ell-1)!}{2^{k-\ell}(k-\ell)!(\ell-1)!}\bigg(
\int_{r}^A\frac{1}{w^{2k-\ell}}\frac{d^{\ell+1}\, \widehat{\phi}}{d w^{\ell+1}}  (w)\frac{dw}{\sqrt{w^2-r^2}} \bigg)\chi_{(0,A)}(r)\, .
\end{align}


(b) Let   $m\geq2$ and let   $\phi$ be a function defined on $\mathbf R^m$ which is even with respect to  any variable.
Suppose   that the Fourier transform $\widehat{\phi}$  of  $\phi$ is supported in $[-A,A]^m$. Let $\Phi$ be related to $\phi$ via \eqref{1} and suppose that for some $k_j\in \mathbf Z\cup\{0\}$ we have
$$
\int_{[0,\infty)^m} \prod_{j=1}^m (1+r_j)^{2k_j+1} |\phi(r_1,\dots, r_m)| dr <\infty.
$$

When all $k_j=0$, then  we have
\begin{equation}\begin{split} \label{xcvfgtr}
&\mathcal{F}_{2,\dots , 2}(\phi)(r_1,\dots , r_m)  \\ &=2^m\int_{r_m}^{A}\cdots  \int_{r_1}^{A}\frac{\partial^m\widehat{\phi}}{\partial w_m\cdots \partial w_1}(w_1,\dots , w_m)
\frac{dw_1}{\sqrt{w_1^2-r_1^2}}\cdots \frac{dw_m}{\sqrt{w_m^2-r_m^2}} \, \chi_{(0,A)^m}(r_1,\dots,r_m).
\end{split}\end{equation}

If all $k_j\ge 1$ we have
\begin{align*}
&\mathcal{F}_{2k_1\!+\!1+\cdots + 2k_m\!+\!1}(\phi)(r_1,\dots,r_m)\\
&=\frac{1}{(2\pi)^{k_1+\cdots +k_m}}\sum_{\ell_1=1}^{k_1}\frac{(-1)^{\ell_1}(2k_1-\ell_1-1)!}{2^{k_1-\ell_1}(k_1-\ell_1)!(\ell_1-1)!}\cdots\sum_{\ell_m=1}^{k_m}\frac{(-1)^{\ell_m}(2k_m-\ell_m-1)!}{2^{k_m-\ell_m}(k_m-\ell_m)!(\ell_m-1)!}\notag\\
&\qquad\frac{1}{r_1^{2k_1-\ell_1}\cdots r_m^{2k_m-\ell_m}}\frac{\partial^{\ell_1+\cdots+\ell_m}\, \widehat{\phi}}
{\partial r_1^{\ell_1}\cdots\partial r_m^{\ell_m}}(r_1,\dots,r_m)\chi_{(0,A)^m}(r_1,\dots,r_m)
\end{align*}
and
\begin{align*}
&\mathcal{F}_{2k_1\!+\!2,\dots , 2k_m\!+\!2}(\phi)(r_1,\dots,r_m)\\
&=\frac{2^m}{(2\pi)^{k_1+\cdots +k_m}}\sum_{\ell_1=1}^{k_1}\frac{(-1)^{\ell_1}(2k_1-\ell_1-1)!}{2^{k_1-\ell_1}(k_1-\ell_1)!(\ell_1-1)!}\cdots\sum_{\ell_m=1}^{k_m}\frac{(-1)^{\ell_m}(2k_m-\ell_m-1)!}{2^{k_m-\ell_m}(k_m-\ell_m)!(\ell_m-1)!} \\
&\qquad \bigg(\int_{[r_1,A]}\cdots\int_{[r_m,A]}\frac{1}{w_1^{2k_1-\ell_1}\cdots w_m^{2k_m-\ell_m}}\frac{\partial^{\ell_1+\cdots+\ell_m+m}\, \widehat{\phi} }
{\partial w_1^{\ell_1+1}\cdots\partial w_m^{\ell_m+1}}(w_1,\dots,w_m) \\
&\qquad\qquad \cdots \frac{dw_1}{\sqrt{w_1^2-r_1^2}} \frac{dw_m}{\sqrt{w_m^2-r_m^2}} \bigg)\chi_{(0,A)^m}(r_1,\dots,r_m) \, .
\end{align*}
\end{theorem}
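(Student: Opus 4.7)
The plan of proof decomposes into two independent ingredients: a base conversion identity that passes from $\widehat{\phi}=\mathcal F_{1,\dots,1}(\phi)$ to $\mathcal F_{2,\dots,2}(\phi)$ in the band-limited setting, and the recursion of Remark~\ref{rem},
\[\mathcal F_{k_1+2,\dots,k_m+2}(\phi)=\frac{(-1)^m}{(2\pi)^m r_1\cdots r_m}\frac{\partial^m \mathcal F_{k_1,\dots,k_m}(\phi)}{\partial r_1\cdots \partial r_m},\]
which raises the Fourier dimension by $2$ in each slot. The four displayed formulas in parts (a) and (b) are obtained by iterating this recursion $k_j$ times, starting either from $\mathcal F_{1,\dots,1}(\phi)=\widehat{\phi}$ (producing odd total dimensions $2k_j+1$) or from $\mathcal F_{2,\dots,2}(\phi)$ (producing even total dimensions $2k_j+2$). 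The $m=1$ case in part (a) contains the essential new content; part (b) then reduces to part (a) by a tensor-product and Fubini argument combined with the multiradial recursion.

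To establish identity \eqref{TBSoo99}, I would start from the Hankel representation
\[\mathcal F_2(\phi)(r)=2\pi\int_0^\infty s\phi(s)J_0(2\pi rs)\,ds\]
and substitute the band-limited inverse Fourier representation $\phi(s)=2\int_0^A\widehat{\phi}(w)\cos(2\pi sw)\,dw$, legitimate by the compact support of $\widehat{\phi}$ and the moment hypothesis on $\phi$. After an application of Fubini, the inner integral $\int_0^\infty s\cos(2\pi sw)J_0(2\pi rs)\,ds$ is evaluated by differentiating the classical identity $\int_0^\infty \sin(as)J_0(bs)\,ds=(a^2-b^2)^{-1/2}\chi_{\{a>b\}}$ in the parameter $a$, producing an expression supported on $\{w>r\}$ that is singular like $(w^2-r^2)^{-3/2}$. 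A single integration by parts in $w$, using $\widehat{\phi}(\pm A)=0$ (forced by the support condition and continuity), regularizes the singularity and converts the outer integral into the absolutely convergent Abel form $\int_r^A \widehat{\phi}'(w)(w^2-r^2)^{-1/2}\,dw$, yielding \eqref{TBSoo99}. The indicator $\chi_{[0,A]}(r)$ arises because the integration interval $[r,A]$ collapses once $r\ge A$.

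With the base cases $\mathcal F_1(\phi)=\widehat{\phi}$ and $\mathcal F_2(\phi)$ in hand, I next verify, by induction on $k$, the combinatorial identity
\[\Bigl(-\tfrac{1}{2\pi r}\tfrac{d}{dr}\Bigr)^{k} h(r)=\frac{1}{(2\pi)^{k}}\sum_{\ell=1}^{k}\frac{(-1)^{\ell}(2k-\ell-1)!}{2^{k-\ell}(k-\ell)!(\ell-1)!}\frac{h^{(\ell)}(r)}{r^{2k-\ell}}.\]
Applied to $h=\widehat{\phi}$ this produces the formula for $\mathcal F_{2k+1}(\phi)$; applied to $h=\mathcal F_2(\phi)$ and pushing the derivatives inside the Abel integral (legitimate thanks to the moment hypothesis on $\phi$), it produces \eqref{cc}. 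The inductive step reduces to a Pascal-type relation on the displayed coefficients. For part (b), the base identity \eqref{xcvfgtr} follows by applying the 1D base identity coordinate by coordinate via Fubini, producing the prefactor $2^m$ and the mixed partial derivative $\partial^m \widehat{\phi}/\partial w_1\cdots\partial w_m$. General $k_j\ge 1$ then follow by iterating the multiradial recursion $k_j$ times in the $j$-th slot applied to either $\widehat{\phi}$ or $\mathcal F_{2,\dots,2}(\phi)$, and the resulting coefficients factor as a product of $m$ copies of the 1D combinatorial identity. The main anticipated obstacle is the careful handling of the singular boundary at $w_j=r_j$ in the iterated Abel integrals and the justification of differentiation under the integral sign; this requires either a distributional framework or a smooth cutoff approximation followed by a limiting argument.
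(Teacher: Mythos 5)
Your overall architecture --- a base conversion from $\widehat{\phi}$ to $\mathcal{F}_{2,\dots,2}(\phi)$, followed by iteration of the dimension-raising recursion \eqref{88} together with the combinatorial expansion of $\bigl(-\tfrac{1}{2\pi r}\tfrac{d}{dr}\bigr)^{k}$ and a coordinatewise tensorization for part (b) --- coincides with the paper's, and that part of your plan is sound. Where you genuinely diverge is in the proof of the base identity \eqref{TBSoo99}. The paper does not evaluate the kernel $\int_0^\infty s\cos(2\pi ws)J_0(2\pi rs)\,ds$ at all: it writes $\phi=U(f\chi_{[0,A]})$ for an unknown profile $f$, where $U(g)(r)=\int_0^\infty g(s)J_0(2\pi rs)s\,ds$, solves the resulting Abel integral equation $\widehat{\phi}(w)=\tfrac{1}{\pi}\int_{|w|}^{A}f(u)\,u\,du/\sqrt{u^2-w^2}$ for $f$ using $\int_y^u w\,dw/\sqrt{(w^2-y^2)(u^2-w^2)}=\tfrac{\pi}{2}$, and separately proves that $U^2$ is a constant multiple of the identity via the discontinuous Weber--Schafheitlin integral \eqref{400}; combining the two reads off $\mathcal{F}_2(\phi)$ as a multiple of $f\chi_{[0,A]}$. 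Your route is more direct and dispenses with both the uniqueness discussion and the Hankel involution, at the price of having to make rigorous sense of a divergent oscillatory kernel.

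That price is where your gap lies. First, Fubini does not apply to your interchange, since $\int_0^\infty s\,|\cos(2\pi ws)J_0(2\pi rs)|\,ds=\infty$; you need to insert a factor $e^{-\epsilon s}$ (harmless because $s\phi(s)\in L^1$ by hypothesis) and interchange at fixed $\epsilon$. Second, and more seriously, ``differentiating the classical identity in $a$'' does not yield the function $-a(a^2-b^2)^{-3/2}\chi_{a>b}$ in any usable sense: that pointwise derivative is not locally integrable at $a=b$, and it is not the distributional derivative. If you then integrate your $(w^2-r^2)^{-3/2}$ expression by parts as an improper integral, the boundary term $\widehat{\phi}(w)(w^2-r^2)^{-1/2}$ at the \emph{lower} endpoint $w=r$ diverges, since $\widehat{\phi}(r)\neq 0$ in general; the vanishing of $\widehat{\phi}$ at $\pm A$ only disposes of the upper endpoint. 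The correct procedure is to write the regularized kernel as $\tfrac{1}{2\pi}\partial_w G_\epsilon(w)$ with $G_\epsilon(w)=\int_0^\infty e^{-\epsilon s}\sin(2\pi ws)J_0(2\pi rs)\,ds\to \tfrac{1}{2\pi}(w^2-r^2)^{-1/2}\chi_{w>r}$, integrate by parts against $G_\epsilon$ \emph{before} letting $\epsilon\to0$, and observe that no boundary term at $w=r$ ever appears. A cousin of this issue recurs when you ``push the derivatives inside the Abel integral'' to obtain \eqref{cc}: one must first integrate by parts in $w$ to trade $(w^2-r^2)^{-1/2}$ for $(w^2-r^2)^{+1/2}$ before differentiating in $r$, exactly as the paper does in its computation of $\mathcal{F}_4$.

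Finally, a warning about constants. Carried out correctly, your computation gives $\mathcal{F}_2(\phi)(r)=-\tfrac{1}{\pi}\int_r^A \widehat{\phi}^{\,\prime}(w)\,dw/\sqrt{w^2-r^2}$, not the factor $2$ of \eqref{TBSoo99}. Do not adjust your constant to match the statement: testing $\widehat{\phi}(w)=(1-w^2)\chi_{|w|\le 1}$ at $r=0$ gives $\mathcal{F}_2(\phi)(0)=2\pi\int_0^\infty\phi(s)s\,ds=\tfrac{2}{\pi}$, which agrees with $-\tfrac{1}{\pi}\int_0^1\widehat{\phi}^{\,\prime}(w)\,dw/w$ and not with $2\int_0^1\widehat{\phi}^{\,\prime}(w)\,dw/w=-4$. (In the paper's own argument there is a sign slip in the differentiation leading to \eqref{100}, and the normalization $U^2=\tfrac{1}{2\pi}\mathrm{id}$ in \eqref{200} is inconsistent with Fourier inversion, which forces $U^2=\tfrac{1}{(2\pi)^2}\mathrm{id}$; the two errors compound to the discrepancy $-2\pi$.) The constants in \eqref{cc} and in the even-dimensional formulas of part (b), including \eqref{xcvfgtr}, inherit the same correction, while the odd-dimensional formulas, which come only from iterating \eqref{88} starting at $\widehat{\phi}$, are unaffected.
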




\begin{remark}
We conclude the following: Under the hypotheses of the preceding theorem (part (b)), if $\mathcal{F}_{1,\dots, 1}(\phi)$ has compact support, then so does $\mathcal{F}_{2,\dots, 2}(\phi)$. More generally, by combining these two theorems, we also deduce that for every integers $k_1,\dots, k_m$ then $\mathcal{F}_{k_1,\dots, k_m}(\phi)$ has compact support too. This property can also be obtained as a consequence of the finite speed of propagation of the Euclidean Laplace operator $\Delta_{\mathbf R^{n}}=\otimes_{j=1}^m \Delta_{\mathbf R^{k_i}}$, see \cite[Lemma 3.1]{BGSY}. Moreover, in the radial case this property can also be rephrased as follows: a Fourier band-limited function is also a Hankel band-limited function, for the ``$J_0$'' Hankel transform and refer the reader to \cite{C,R} for more details. The work of Rawn \cite{R} also provided an inspiration for identity \eqref{TBSoo99}. 
\end{remark}

\begin{remark} For $\Phi$ related to $\phi$ via \eqref{1}, under the hypotheses of the preceding theorem (part (b)), we have an exact formula for its Fourier transform, only in terms of the Fourier transform of the function $\phi$ on $\mathbf R^{1}\times \cdots \times \mathbf R^{1}$. 
\end{remark}

We will also give some examples in the last section and describe an application to the framework of bilinear Marcinkiewicz-type  Fourier multipliers. More precisely, we show that the transformation consisting to replace a bi-even bilinear kernel $K$ on $\R$ by a bilinear kernel $\widetilde{K}$ on $\R^n$ with $\widetilde{K}(y,z)= (|y||z|)^{-n+1} K(|y|,|z|)$ preserves the Marcinkiewicz conditions (see Subsection \ref{subsec:m} for details).

\section{Proofs}

\begin{proof}[Proof of Theorem 1.1]

For  simplicity of exposition, we only consider the case where $k_1=\cdots = k_m=n$. The general case  only presents notational  differences. Throughout the proof we denote
by $J_\nu$ the Bessel function of order $\nu$ and by
$\widetilde{J}_\nu(t)=t^{-\nu}J_\nu(t)$.

Using polar coordinates, the Fourier transform of an integrable radial function $\Phi$ on $\mathbf{R}^{mn}$ is given by
\begin{align*}
&F_{n,\dots ,n}(\Phi)(\xi_1,\xi_2,\dots,\xi_m)\\
&=\int_0^\infty\!\!\!\cdots\!\! \int_0^\infty\phi(s_1,\dots,s_m)\int_{(S^{n-1})^m} e^{-2\pi i s\xi\cdot\theta}\, d\theta s_1\cdots s_mds_1\cdots ds_m \notag\\
&=(2\pi)^m\int_0^\infty\cdots\int_0^\infty\phi(s_1,\dots,s_m)J_{\frac{n}{2}-1}(2\pi s_1|\xi_1|)
\left(\frac{s_1}{|\xi_1|}\right)^{\frac{n}{2}-1}s_1ds_1\notag\\
&\qquad \cdots J_{\frac{n}{2}-1}(2\pi s_m|\xi_m|)\left(\frac{s_m}{|\xi_m|}\right)^{\frac{n}{2}-1}s_mds_m \notag\\
&=(2\pi)^{\frac{mn}{2}}\int_{[0,\infty]^m} \phi(s_1,\dots,s_m)\widetilde{J}_{\frac{n}{2}-1}(2\pi s_1r_1)
s_1^{n}\frac{ds_1}{s_1}\cdots\widetilde{J}_{\frac{n}{2}-1}(2\pi s_mr_m)s_m^{n}\frac{ds_m}{s_m} \notag\\
&:=\mathcal{F}_{n,\dots ,n}(\phi)(r_1,\dots,r_m),
\end{align*}
where    $|\xi_1|=r_1,\dots, |\xi_m|=r_m.$

A useful fact that will be used is
  that $\{-\frac{1}{2\pi}\frac{1}{r_i}\frac{\partial}{\partial r_i}\}_{i=1}^m $ commute for different values of $i$.

We differentiate $\mathcal{F}_{n,\dots ,n}(\phi)(r_1,\dots,r_m)$ with respect with $r_1$.
Using the identity
$$
\frac{d}{d t}\widetilde{J}_\nu(t)=-t\widetilde{J}_{\nu+1}(t),
$$ which holds for
all $t>0$, we obtain
\begin{align*}
\frac{\partial}{\partial r_1} \mathcal{F}_{n,\dots ,n}(\phi) (r_1,\dots,r_m) =-(2\pi)^{\frac{mn}{2}+2}r_1\int_0^\infty\cdots\int_0^\infty\phi(s_1,\dots,s_m) \hspace{1.5cm}  \\
 \widetilde{J}_{\frac{n+2}{2}-1}(2\pi s_1r_1)s_1^{n+2-1}ds_1 \cdots\widetilde{J}_{\frac{n}{2}-1}(2\pi s_mr_m)s_m^{n-1}ds_m.
\end{align*}

Differentiating with respect to the remaining variables $r_2, \dots , r_m$ we obtain
\begin{align*}
& \!\!\!\!\!
\frac{\partial^m}{\partial r_m\cdots\partial r_1}(\mathcal{F}_{n,\cdots ,n}(\phi))(r_1,\dots,r_m)\\
& =
(-1)^m(2\pi)^{2m}(2\pi)^{\frac{mn}{2}}r_1\cdots r_m\int_0^\infty\int_0^\infty\phi(s_1,\dots,s_m)\notag\\
&\qquad\qquad \widetilde{J}_{\frac{n+2}{2}-1}(2\pi s_1r_1)s_1^{n+2-1}ds_1
\cdots\widetilde{J}_{\frac{n+2}{2}-1}(2\pi s_mr_m)s_m^{n+2-1}ds_m\notag\\
&=(-1)^m(2\pi)^mr_1\cdots r_m\mathcal{F}_{n+2,\dots ,n+2}(\phi)(r_1,\dots,r_m)
\end{align*}
or
\begin{align}\label{88}
\mathcal{F}_{n+2,\dots ,n+2}(\phi)(r_1,\dots,r_m)&=(-1)^m\frac{1}{(2\pi)^mr_1\cdots r_m}\frac{\partial^m \mathcal{F}_{n,\cdots ,n}(\phi)}{\partial r_m\cdots\partial r_1}(r_1,\dots,r_m)\notag\\
&=\Big(-\frac{1}{2\pi}\frac{1}{r_m}\frac{\partial}{\partial r_m}\Big)\cdots
\Big(-\frac{1}{2\pi}\frac{1}{r_1}\frac{\partial}{\partial r_1}\Big) \mathcal{F}_{n,\dots ,n}(\phi ) (r_1,\dots,r_m).
\end{align}

It is easy to check the interchanging differentiation and integration in the preceding calculations is permissible because of the hypothesis on the integrability of $\Phi$ which translates to a condition about the integrability of $\phi(s_1,\dots, s_m)(s_1^2+\cdots+s_m^2)^{n-1}$ for all $n\le 2(mk+m)$.

For $k\in (\mathbf{Z}^+)^m$, using \eqref{88} by induction on $n$,
starting with $n=1$, we obtain
\begin{align*}
&\!\!\!\!\! \mathcal{F}_{ 2k_1\!+\! 1,\dots ,2k_m\!+\! 1}(\phi)(r_1,\dots,r_m)\\
&=\Big(-\frac{1}{2\pi}\frac{1}{r_m}\frac{\partial}{\partial r_m}\Big)^{k_m}\cdots
\Big(-\frac{1}{2\pi}\frac{1}{r_1}\frac{\partial}{\partial r_1}\Big)^{k_1}
(\mathcal{F}_{1,\dots ,1}\emph{}(\phi))(r_1,\dots,r_m)\notag\\
&=\Big(-\frac{1}{2\pi}\frac{1}{r_m}\frac{\partial}{\partial r_m}\Big)^{k_m}
\cdots\Big(-\frac{1}{2\pi}\frac{1}{r_2}\frac{\partial}{\partial r_2}\Big)^{k_2}
 \\
&\qquad\qquad\sum_{\ell_1=1}^{k_1} \frac{(-1)^{\ell_1}(2k_1-\ell_1-1)!}{2^{k_1-\ell_1}(k_1-\ell_1)!(\ell_1-1)!}
\frac{1}{r_1^{2k_1-\ell_1}}\frac{\partial^{\ell_1}\mathcal{F}_{1,\dots,1}(\phi)}{\partial r_1^{\ell_1}} (r_1,\dots,r_m)\notag\\
&=\frac{1}{(2\pi)^{k_1+\cdots +k_m}}\sum_{\ell_m=1}^{k_m}\frac{(-1)^{\ell_m}(2k_m-\ell_m-1)!}{2^{k_m-\ell_m}(k_m-\ell_m)!(\ell_m-1)!}
\frac{1}{r_m^{2k_m-\ell_m}}\notag\\
&\qquad\qquad\cdots\sum_{\ell_1=1}^{k_1}\frac{(-1)^{\ell_1}(2k_1-\ell_1-1)!}{2^{k_1-\ell_1}(k_1-\ell_1)!(\ell_1-1)!}
\frac{1}{r_1^{2k_1-\ell_1}}\frac{\partial^{\ell_1+\cdots+\ell_m}\mathcal{F}_{1,\dots,1}(\phi)}{\partial r_m^{\ell_m}\cdots\partial r_1^{\ell_1}}(r_1,\dots,r_m)\notag\\
\end{align*}
and likewise we obtain
\begin{align*}
&\!\!\!\!\mathcal{F}_{2k_1\!+\!2,\dots ,2k_1\!+\!2}(\phi)(r_1,\dots,r_m) \\
&=\frac{1}{(2\pi)^{k_1+\cdots +k_m}}\sum_{\ell_m=1}^{k_1}\frac{(-1)^{\ell_m}(2k_m-\ell_m-1)!}{2^{k_m-\ell_m}(k_m-\ell_m)!(\ell_m-1)!}
\frac{1}{r_m^{2k-\ell_m}}\notag\\
&\qquad\qquad\cdots\sum_{\ell_1=1}^{k_1}\frac{(-1)^{\ell_1}(2k_1-\ell_1-1)!}{2^{k_1-\ell_1}(k_1-\ell_1)!(\ell_1-1)!}
\frac{1}{r_1^{2k_1-\ell_1}}\frac{\partial^{\ell_1+\cdots+\ell_m}\mathcal{F}_{2,\dots,2}(\phi) }{\partial r_m^{\ell_m} \cdots\partial r_1^{\ell_1}} (r_1,\dots,r_m).
\end{align*}

This completes the proof of Theorem 1.1.
\end{proof}

\bigskip

\begin{proof}[Proof of Theorem 1.5]

 We prove this theorem with $A=\pi.$ If this case is proved, then we can take $\phi_0(t)=\frac{\pi}{A}\phi(\frac{\pi}{A}t)$ and by a change of variables we obtain \eqref{TBSoo99} and \eqref{cc} in Theorem 1.5.

\smallskip
\noindent {\bf Step 1.}
It is a well known fact (see \cite{G1}) that
\begin{align}
F_2(\Phi)(\xi) =2\pi\int_0^\infty\phi(s)J_0(2\pi s| \xi |)sds
=\mathcal{F}_2(\phi)(r)\, ,
\end{align}
where $J_0(t)=\frac{1}{\pi}\int_{-1}^{1}e^{ist}\frac{ds}{\sqrt{1-s^2}}$ is the Bessel function
of order zero.

In this step, we want to prove that given $\phi$ even function on the real line,
there exists one and only one function   $f$ on a real line such that
\begin{align}\label{15}
\phi(x)=\int_0^\pi f(u)J_0(2\pi ux)u\, du.
\end{align}

First, we look for necessary conditions on $f$, to be a solution of \eqref{15}.
So momentarily assume that such an $f$ exists,
by applying a change of variables and Fubini's theorem, we obtain
\begin{align}
\int_0^\pi f(u)J_0(2\pi ux)u\,du&
=\frac{1}{\pi}\int_0^\pi f(u)u\int_{-1}^{1}e^{i2\pi uxs}\frac{ds}{\sqrt{1-s^2}}\, du\notag\\
&=\frac{1}{\pi}\int_0^\pi f(u)u\int_{-u}^{u}e^{i2\pi wx}\frac{dw}{\sqrt{u^2-w^2}}d u\notag\\
&=\int_{-\pi}^\pi e^{2\pi iwx}\left\{\frac{1}{\pi}\int_{\mid w\mid}^{\pi}f(u)u\frac{du}{\sqrt{u^2-w^2}}\right\}dw. \label{eq:6}
\end{align}

Thus, we rewrite \eqref{15} as
\begin{align}
\phi(x)=\int_{-\pi}^\pi e^{2\pi iwx}\left\{\frac{1}{\pi}\int_{\mid w\mid}^{\pi}f(u)\frac{udu}{\sqrt{u^2-w^2}}
\right\}dw.
\end{align}

On the other hand, recalling that  $\widehat{\phi}$ is supported in $[-\pi,\pi]$, we have $\phi(x)=\int_{-\pi}^\pi\widehat{\phi}(w)e^{2\pi iwx}dw$ and thus by identifying with \eqref{eq:6}, it comes
\begin{align}\label{18}
\widehat{\phi}(w)=\frac{1}{\pi}\int_{\mid w\mid}^{\pi}f(u)\frac{udu}{\sqrt{u^2-w^2}}.
\end{align}

Since $\phi$ is even, so is $\widehat{\phi}$, thus it is sufficient to deal with the case $w>0$.

Integrating both sides of \eqref{18} with respect to $\frac{wdw}{\sqrt{w^2-y^2}}$ we obtain
\begin{align}\label{19}
h(y):=\int_{y}^{\pi}\widehat{\phi}(w)\frac{wdw}{\sqrt{w^2-y^2}}
=\frac{1}{\pi}\int_{y}^{\pi}\int_{w}^{\pi}f(u)\frac{udu}{\sqrt{u^2-w^2}}\frac{wdw}{\sqrt{w^2-y^2}}.
\end{align}

But an easy change of variables shows that
 $\int_{y}^{u}\frac{wdw}{\sqrt{w^2-y^2}{\sqrt{u^2-w^2}}}=\frac{\pi}{2}.$ Then applying Fubini's theorem, we deduce
\begin{align}\label{20}
h(y) =\frac{1}{\pi}\int_{y}^{\pi}f(u)u\int_{y}^{u}\frac{wdw}{\sqrt{u^2-w^2}\sqrt{w^2-y^2}}du
 =\frac{1}{2}\int_{y}^{\pi}f(u)udu.
\end{align}

Combining \eqref{19} with \eqref{20}, we get
\begin{align}\label{21}
\int_{y}^{\pi}f(u)udu=2\int_{y}^{\pi}\widehat{\phi}(w)\frac{wdw}{\sqrt{w^2-y^2}}.
\end{align}

We integrate by parts in  \eqref{21}, recalling the support of $\widehat{\phi}$, and differentiating with respect to $y$ we obtain
\begin{align*}
-f(y)y
&=2\frac{d}{dy}\left(\sqrt{\pi^2-y^2}\widehat{\phi}(\pi)-\int_{y}^{\pi}\sqrt{w^2-y^2}(\widehat{\phi}\, )^\prime(w)dw\right)\notag\\
&=-2\int_{y}^{\pi}\frac{y}{\sqrt{w^2-y^2}}(\widehat{\phi}\, )^\prime(w)dw
\end{align*}
thus
\begin{align}\label{100}
f(y)=2\int_{y}^{\pi}(\widehat{\phi}\, )^\prime(w)\frac{dw}{\sqrt{w^2-y^2}}.
\end{align}
Once this calculation is done, it is quite easy to check that the function $f$ given in
\eqref{100} satisfies \eqref{15} by reversing the preceding steps.
Moreover, the previous computations yield that this solution of \eqref{15} is the only one.

\smallskip

\noindent {\bf Step 2.}
For   functions $\phi$ such that $\int_0^\infty|\phi(s)|s\, ds<\infty$ we define an operator
$$
U(\phi)(r)=\int_0^\infty\phi(s)J_0(2\pi sr)\, sds .
$$
We want to prove the identity
\begin{equation}\label{200}
U^2(\phi)(t)=\frac{1}{2\pi}\phi(t).
\end{equation}
To prove \eqref{200}, it is enough to show that for all $t>0$ we have
\begin{align}\label{300}
\int_0^\infty\int_0^\infty\phi(s)J_0(2\pi sr)sdsJ_0(2\pi rt)rdr=\frac{1}{2\pi}\phi(t).
\end{align}

We start with the identity (see \cite{W} page 406)
\begin{equation}\label{400}
t\int_0^\infty J_1(2\pi tr)J_0(2\pi sr)dr = \begin{cases} 1 & \qquad\text{$s<t$,} \\
0 & \qquad\text{$s>t$.} \end{cases}
\end{equation}
Multiplying \eqref{400} by $\phi(s)s$ and integrating from 0 to $\infty$, we obtain
\begin{align}\label{500}
\int_0^\infty \phi(s)st\int_0^\infty J_1(2\pi tr)J_0(2\pi sr)drds=\int_0^t\phi(s)sds.
\end{align}

Using that $\frac{d}{du}(u^\nu J_\nu(u))=u^\nu J_{\nu-1}(u)$, and differentiating both sides of \eqref{500} with respect to $ t$, we get
\begin{align*}
\int_0^\infty \phi(s)s\int_0^\infty 2\pi trJ_0(2\pi tr)J_0(2\pi sr)drds=\phi(t)t.
\end{align*}

This proves \eqref{300} and hence \eqref{200}.

\noindent {\bf  Step 3.}
In view of the result of Step 1, there exists a function $f$ such that
\begin{align}\label{700}
\mathcal{F}_2(\phi)(r)
&=2\pi\int_0^\infty\phi(s)J_0(2\pi sr)sds \notag\\
&=2\pi\int_0^\infty\int_0^\infty f(u)\chi_{[0,\pi]}(u)J_0(2\pi su)uduJ_0(2\pi sr)sds \notag\\
&= f(r)\chi_{[0,\pi]}(r) \notag\\
&=2\int_{r}^{\pi}(\widehat{\phi}\, )^\prime(w)\frac{dw}{\sqrt{w^2-r^2}}\chi_{[0,\pi]}(r).
\end{align}
which proves \eqref{TBSoo99}.

Combining \eqref{700} with the result of Theorem 1.1. when $m=1$, we obtain
\begin{align}\label{800}
\mathcal{F}_4(\phi)(r)
&=-\frac{1}{2\pi}\frac{1}{r}\frac{d}{dr}(\mathcal{F}_2(\phi))(r)\notag\\
&=-2\frac{1}{2\pi}\frac{1}{r}\frac{d}{dr}\left(-\int_{r}^{\pi}\frac{d}{dw}\Big(\frac{(\widehat{\phi}\, )^\prime(w)}{w}\Big)\sqrt{w^2-r^2}dw\right)\chi_{(0,\pi)}(r)\notag\\
&=\frac{2}{2\pi}\bigg( \int_{r}^{\pi}\frac{d}{dw}\Big(\frac{(\widehat{\phi}\, )^\prime(w)}{w}\Big)\frac{dw}{\sqrt{w^2-r^2}}\bigg)\chi_{(0,\pi)}(r).
\end{align}

Differentiating \eqref{800} $k-1$ times, we obtain \eqref{cc} with $A=\pi$.
Due to symmetry of $\phi$, the other formula in Theorem 1.5 is directly deduced from the first equation in Theorem 1.1.

\smallskip

We now proceed to part (b).
For simplicity we look at the case where $m=2$ and $A=\pi$.
\smallskip

\noindent {\bf Step 1.}
For $\Phi$ on $\mathbf{R}^4$ and $\xi\in\mathbf{R}^2$, $\eta\in\mathbf{R}^2$
\begin{align*}
F_{2,2}(\Phi)(\xi,\eta)&=
\int_0^\infty\int_0^\infty\phi(s_1,s_2)\int_{S^1}\int_{S^1}e^{-2\pi s_1\eta\cdot
\theta_1}e^{-2\pi s_2\xi\cdot\theta_2}d\theta_1d\theta_2s_1s_2ds_1ds_2 \\
&=(2\pi)^2\int_0^\infty\int_0^\infty\phi(s_1,s_2)J_0(2\pi s_1|\xi|)s_1ds_1J_0(2\pi s_2|\eta|)s_2ds_2 \\
&:=\mathcal{F}_{2,2}(\phi)(r_1,r_2),
\end{align*}
where $\Phi(\xi,\eta)=\phi(|\xi|,|\eta|)$, $J_0(t)=\frac{1}{\pi}\int_{-1}^{1}e^{ist}\frac{ds}{\sqrt{1-s^2}}$ and $|\xi|=r_1$, $|\eta|=r_2.$

We proceed as for the part (a). So we first aim to show that there exists a unique function $f$ on $[0,\pi]^2$ such that
\begin{align}\label{kk}
\phi(x_1,x_2)=\int_0^\pi\int_0^\pi f(u_1,u_2)J_0(2\pi u_1x_1)J_0(2\pi u_2x_2)u_1u_2du_1du_2.
\end{align}

Assume momentarily  that such a function exists. For a
function $h$ we have
\begin{align}
\int_0^\pi h(u)J_0(2\pi ux)udu&=\frac{1}{\pi}\int_0^\pi h(u)u\int_{-1}^{1}e^{2\pi i uxs}\frac{ds}{\sqrt{1-s^2}}du\notag\\
&=\frac{1}{\pi}\int_0^\pi h(u)u\int_{-u}^{u}e^{2\pi iwx}\frac{dw}{\sqrt{u^2-w^2}}d u\notag\\
&=\int_{-\pi}^\pi e^{2\pi iwx}\left\{\frac{1}{\pi}\int_{\mid w\mid}^{\pi}h(u)u\frac{du}{\sqrt{u^2-w^2}}\right\}dw.
\end{align}
Thus, we rewrite \eqref{kk} as
\begin{align*}
\lefteqn{\phi(x_1,x_2)=} & & \\
 & & \frac{1}{\pi^2}\int_{-\pi}^\pi\int_{-\pi}^\pi e^{2\pi iw_1x_1}e^{2\pi iw_2x_2}\left\{\int_{\mid
w_2\mid}^{\pi}\int_{\mid w_1\mid}^{\pi}f(u_1,u_2)\frac{u_1du_1}{\sqrt{u_1^2-w_1^2}}
\frac{u_2du_2}{\sqrt{u_2^2-w_2^2}}\right\}dw_1dw_2.
\end{align*}
Recalling the support of $\widehat{\phi}$, we have $\phi(x_1,x_2)=\int_{-\pi}^\pi\int_{-\pi}^\pi\widehat{\phi}(w_1,w_2)e^{2\pi i (w_1x_1+w_2x_2)}dw_1dw_2$. Thus the function $f$ on $\mathbf{R}^2$ would satisfy:
\begin{align}\label{gg}
\widehat{\phi}(w_1,w_2)=\frac{1}{\pi^2}\int_{\mid w_2\mid}^{\pi}\int_{\mid w_1\mid}^{\pi}f(u_1,u_2)\frac{u_1du_1}{\sqrt{u_1^2-w_1^2}}
\frac{u_2du_2}{\sqrt{u_2^2-w_2^2}}.
\end{align}

Since $\phi$ is even, it is sufficient to consider the case $w_1,w_2>0$.

Then integrating both sides of \eqref{gg} with respect to
$\frac{w_2dw_2}{\sqrt{w_2^2-y_2^2}}\frac{w_1dw_1}{\sqrt{w_1^2-y_1^2}}$
we obtain
\begin{align}
h(y_1,y_2):&=\int_{y_1}^{\pi}\int_{y_2}^{\pi}\widehat{\phi}(w_1,w_2)\frac{w_2dw_2}{\sqrt{w_2^2-y_2^2}}\frac{w_1dw_1}{\sqrt{w_1^2-y_1^2}}\notag\\
&=\frac{1}{\pi^2}\int_{y_1}^{\pi}\int_{y_2}^{\pi}\int_{w_2}^{\pi}\int_{w_1}^{\pi}f(u_1,u_2)
\frac{u_1du_1}{\sqrt{u_1^2-w_1^2}}\frac{u_2du_2}{\sqrt{u_2^2-w_2^2}}\frac{w_2dw_2}{\sqrt{w_2^2-y_2^2}}\frac{w_1dw_1}{\sqrt{w_1^2-y_1^2}}.
\end{align}

Note that $\int_{y}^{u}\frac{wdw}{\sqrt{w^2-y^2}{\sqrt{u^2-w^2}}}=\frac{\pi}{2}.$ Applying Fubini's theorem three times, we get
\begin{align}\begin{split}\label{ggg}
h(y_1,y_2)&=\frac{1}{\pi^2}\int_{y_1}^{\pi}\int_{y_2}^{\pi}\left\{\int_{w_1}^{\pi}f(u_1,u_2)
\frac{u_1du_1}{\sqrt{u_1^2-w_1^2}}\right\}\int_{y_2}^{u_2}\frac{w_2dw_2}{\sqrt{w_2^2-y_2^2}{\sqrt{u_2^2-w_2^2}}}u_2du_2\frac{w_1dw_1}{\sqrt{w_1^2-y_1^2}} \\
&=\frac{1}{2\pi}\int_{y_1}^{\pi}\int_{y_2}^{\pi}\left\{\int_{w_1}^{\pi}f(u_1,u_2)
\frac{u_1du_1}{\sqrt{u_1^2-w_1^2}}\right\}u_2du_2\frac{w_1dw_1}{\sqrt{w_1^2-y_1^2}} \\
&=\frac{1}{2\pi}\int_{y_2}^{\pi}\left\{\int_{y_1}^{\pi}\int_{w_1}^{\pi}f(u_1,u_2)
\frac{u_1du_1}{\sqrt{u_1^2-w_1^2}}\frac{w_1dw_1}{\sqrt{w_1^2-y_1^2}}\right\}u_2du_2 \\
&=\frac{1}{4}\int_{y_2}^{\pi}\int_{y_1}^{\pi}f(u_1,u_2)u_1du_1u_2du_2.
\end{split}\end{align}

Using \eqref{gg} and  and \eqref{ggg}, we deduce
\begin{align}\label{bb}
\int_{y_2}^{\pi}\int_{y_1}^{\pi}f(u_1,u_2)u_1du_1u_2du_2
=4\int_{y_1}^{\pi}\int_{y_2}^{\pi}\widehat{\phi}(w_1,w_2)\frac{w_2dw_2}{\sqrt{w_2^2-y_2^2}}\frac{w_1dw_1}{\sqrt{w_1^2-y_1^2}}.
\end{align}

We can recover $f$ from this equation.
Differentiating \eqref{bb} with respect with $y_1$ and $y_2$, we obtain
\begin{align*}
&f(y_1,y_2)y_1y_2 \\
&=4\frac{\partial^2}{\partial y_2\partial y_1}
\left(\int_{y_1}^{\pi}\left\{\int_{y_2}^{\pi}\widehat{\phi}(w_1,w_2)\frac{w_2dw_2}{\sqrt{w_2^2-y_2^2}}\right\}\frac{w_1dw_1}{\sqrt{w_1^2-y_1^2}}\right) \\
&=4\frac{\partial^2}{\partial y_2\partial y_1} \\
&\left(\sqrt{\pi^2-y_1^2}\int_{y_2}^{\pi}\widehat{\phi}(\pi,w_2)\frac{w_2dw_2}{\sqrt{w_2^2-y_2^2}}
-\int_{y_1}^{\pi}\sqrt{w_1^2-y_1^2}\left\{\int_{y_2}^{\pi}\frac{\partial  \widehat{\phi} }
{\partial w_1}(w_1,w_2)\frac{w_2dw_2}{\sqrt{w_2^2-y_2^2}}\right\}dw_1\right).
\end{align*}

Recalling the support of $\widehat{\phi}$, we get
\begin{align*}
& f(y_1,y_2)y_1y_2\\
&=4\frac{\partial^2}{\partial y_2\partial y_1} \\
&\left(-\int_{y_1}^{\pi}\sqrt{w_1^2-y_1^2}\left\{\sqrt{\pi^2-y_2^2}\frac{\partial  \widehat{\phi}  }{\partial w_1}(\pi,w_2)
-\int_{y_2}^{\pi}\sqrt{w_2^2-y_2^2}\frac{\partial^2 \widehat{\phi}  }{\partial w_2\partial w_1}
(w_1,w_2)dw_2\right\}dw_1\right) \\
&=4\frac{\partial^2}{\partial y_2\partial y_1}
\left(\int_{y_1}^{\pi}\sqrt{w_1^2-y_1^2}\int_{y_2}^{\pi}\sqrt{w_2^2-y_2^2}\frac{\partial^2 \widehat{\phi}  }{\partial w_2\partial w_1}
(w_1,w_2)dw_2dw_1\right) \\
&=4\int_{y_1}^{\pi}\frac{y_1}{\sqrt{w_1^2-y_1^2}}\int_{y_2}^{\pi}\frac{y_2}{\sqrt{w_2^2-y_2^2}}
\frac{\partial^2 \widehat{\phi}  }{\partial w_2\partial w_1}(w_1,w_2)dw_2dw_1
\end{align*}
or
\begin{align*}
f(y_1,y_2)
&=4\int_{y_1}^{\pi}\int_{y_2}^{\pi}\frac{\partial^2 \widehat{\phi}}{\partial w_2\partial w_1}  (w_1,w_2)
\frac{dw_2}{\sqrt{w_2^2-y_2^2}}\frac{dw_1}{\sqrt{w_1^2-y_1^2}}.
\end{align*}
We notice that this function $f$ we have constructed in this way satisfies \eqref{kk} by reversing the preceding
steps and is the unique solution.
\smallskip

\noindent {\bf Step 2.}
For  functions $\phi$ on $\mathbf{R}^2$ such that $\int_0^\infty\int_0^\infty|\phi(s_1,s_2)|s_1s_2ds<\infty$, we
 define an operator $U$ by setting
$$
U(\phi)(r_1,r_2)=\int_0^\infty\int_0^\infty\phi(s_1,s_2)J_0(2\pi s_1r_1)s_1ds_1J_0(2\pi s_2r_2)s_2ds_2.
$$
We want to prove the following identity
\begin{equation}\label{TBP}
U^2(\phi)(t_1,t_2)=\frac{1}{(2\pi)^2}\phi(t_1,t_2).
\end{equation}
It is enough to show
\begin{align*}
& \int_0^\infty\int_0^\infty\int_0^\infty\int_0^\infty\phi(s_1,s_2)J_0(2\pi s_1r_1)s_1ds_1J_0(2\pi s_2r_2)s_2ds_2J_0(2\pi r_1t_1)r_1dr_1J_0(2\pi r_2t_2)r_2dr_2\\
& =\frac{1}{(2\pi)^2}\phi(t_1,t_2).
\end{align*}

We make use of the fact   below that  can be found in \cite{W} page 406:
$$
t_2t_1\int_0^\infty\int_0^\infty J_1(2\pi t_1r_1)J_0(2\pi s_1r_1)dr_1J_1(2\pi t_2r_2)J_0(2\pi s_2r_2)dr_2
=\begin{cases} 1\qquad \textup{if $s_1<t_1$ and $s_2<t_2$.} \\
0 \qquad \textup{otherwise.} \end{cases}
$$
Multiplying the preceding identity by $\phi(s_1,s_2)s_1s_2$, integrating both sides in $s_1$ and $s_2$, we obtain
\begin{align}\label{abc}
\int_0^\infty\int_0^\infty\phi(s_1,s_2)s_1s_2t_2t_1\int_0^\infty\int_0^\infty J_1(2\pi t_1r_1)J_0(2\pi s_1r_1)dr_1J_1(2\pi t_2r_2)J_0(2\pi s_2r_2)dr_2ds_1ds_2\notag \\
=\int_0^{t_2}\int_0^{t_1}\phi(s_1,s_2)s_1s_2ds_1ds_2.
\end{align}

By applying $\frac{d}{du}(u^\nu J_\nu(u))=u^\nu J_{\nu-1}(u)$, and differentiating both sides of \eqref{abc} with respect to $t_1$ and $t_2$, we obtain
\begin{align*}
&\int_0^\infty\int_0^\infty\phi(s_1,s_2)s_1s_2\int_0^\infty\int_0^\infty (2\pi r_1t_1)J_0(2\pi t_1r_1)J_0(2\pi s_1r_1)dr_1 \\
&\qquad\qquad (2\pi r_2t_2)J_0(2\pi t_2r_2)J_0(2\pi s_2r_2)dr_2ds_1ds_2 \\
&=\phi(t_1,t_2)t_1t_2.
\end{align*}
which  proves \eqref{TBP}.

\smallskip

\noindent {\bf Step 3.}
Using the results of the Step 1 and 2, there exists a function $f$ on $\mathbf{R}^2$ such that
\begin{align*}
\mathcal{F}_{2,2}(\phi)(r_1,r_2)&=(2\pi)^2\int_0^\infty\int_0^\infty\phi(s_1,s_2)J_0(2\pi s_1r_1)s_1ds_1J_0(2\pi s_2r_2)s_2ds_2 \notag\\
&=(2\pi)^2\int_0^\infty\int_0^\infty\int_0^\pi\int_0^\pi f(u_1,u_2)J_0(2\pi u_1sx_1)J_0(2\pi u_2s_2)u_1u_2du_1du_2\notag\\
&\qquad \qquad J_0(2\pi s_1r_1)s_1ds_1J_0(2\pi s_2r_2)s_2ds_2 \notag\\
&=f(r_1,r_2)\chi_{[-\pi,\pi]\times[-\pi,\pi]}(r_1,r_2)\notag\\
&=4\int_{r_2}^{\pi}\int_{r_1}^{\pi}\frac{\partial^2\widehat{\phi}}{\partial w_2\partial w_1}(w_1,w_2)
\frac{dw_1}{\sqrt{w_1^2-r_1^2}}\frac{dw_2}{\sqrt{w_2^2-r_2^2}}\chi_{[0,\pi]\times[0,\pi]}(r_1,r_2)
\end{align*}
which proves \eqref{xcvfgtr} when $m=2$.

Applying \eqref{88} with $m=2,n=2$, we obtain
\begin{align*}
\mathcal{F}_{4,4}(\phi)(r_1,r_2)
&=\Big(-\frac{1}{2\pi}\frac{1}{r_2}\Big)\Big(-\frac{1}{2\pi}\frac{1}{r_1}\Big)\frac{\partial^2}{\partial r_2\partial r_1}
\left\{\mathcal{F}_{2,2}(\phi)(r_1,r_2)\right\}\notag\\
&=4\Big(-\frac{1}{2\pi}\frac{1}{r_2}\Big)\Big(-\frac{1}{2\pi}\frac{1}{r_1}\Big)\frac{\partial^2}{\partial r_2\partial r_1}
\left\{\int_{r_2}^{\pi}\int_{r_1}^{\pi}\frac{\partial^2\widehat{\phi} }{\partial w_2\partial w_1}
\frac{dw_1}{\sqrt{w_1^2-r_1^2}}\frac{dw_2}{\sqrt{w_2^2-r_2^2}}\right\}\notag\\
&=4\Big(-\frac{1}{2\pi}\frac{1}{r_2}\Big)\Big(-\frac{1}{2\pi}\frac{1}{r_1}\Big)\frac{\partial^2}{\partial r_2\partial r_1}\notag\\
&\qquad\left\{\int_{r_2}^{\pi}\int_{r_1}^{\pi}\frac{\partial}{\partial w_2}\left(\frac{1}{w_2}\frac{\partial}{\partial w_1}\left(\frac{1}{w_1}\frac{\partial^2\widehat{\phi} }{\partial w_2\partial w_1}\right)\right)
\sqrt{w_1^2-r_1^2}dw_1\sqrt{w_2^2-r_2^2}dw_2\right\}\notag\\
&=4\frac{1}{(2\pi)^2}\int_{r_2}^{\pi}\int_{r_1}^{\pi}\frac{\partial}{\partial w_2}\left(\frac{1}{w_2}\frac{\partial}{\partial w_1}\left(\frac{1}{w_1}\frac{\partial^2 \widehat{\phi} }{\partial w_2\partial w_1}\right)\right)
\frac{dw_1}{\sqrt{w_1^2-r_1^2}}\frac{dw_2}{\sqrt{w_2^2-r_2^2}},
\end{align*}
where $(r_1,r_2)\in(0,\pi)\times(0,\pi).$

Iterating this procedure,  we complete the proof when $m=2$. The case of general $m$ presents only notational differences and can
be easily deduced  by induction.
\end{proof}

\section{Applications and Examples}

\subsection{Applications to bilinear Marcinkiewicz operators} \label{subsec:m}

Let us first recall the setting of bilinear Fourier multipliers. On $\R^n$, a bilinear operator $T$ acting from $\s(\R^n) \times \s(\R^n)$ into $\s'(R^n)$ is a bilinear Fourier multiplier if it commutes with the simultaneous translations. Equivalently, there exist a bilinear kernel $K \in \s'(\R^{2n})$ and a bilinear symbol $m\in \s'(\R^{2n})$ such that for every smooth functions $f,g,h\in \s(\R^n)$ we have the two following representations:
\begin{align*}
\langle T(f,g), h\rangle & = \int_{\R^{3n}} K(y,z) f(x-y) g(x-z) h(x) \, dx\, dy\, dz \\
 &  = \int_{\R^{2n}} m(\xi,\eta) \widehat{f}(\xi) \widehat{g}(\eta) \widehat{h}(\xi+\eta) \, d\xi\, d\eta.
 \end{align*}
The kernel $K$ and the symbol $m$ are related by the Fourier transform $ K = \widehat{m}$.
We denote by $T_K$ the bilinear operator associated to the kernel $K$.

Then consider a bi-even bilinear kernel $K$ on $\R^2$ and exponents $p_1,p_2\geq 1$ such that the bilinear operator $T_K$ is bounded from $L^{p_1}(\R) \times L^{p_2}(\R)$ into $L^p(\R)$, where $p$ is given by the H\"older scaling $p^{-1}=p_1^{-1}+ p_2^{-1}$.
Now for $n\geq 2$, we may consider the bilinear kernel defined on $\R^n$ by
$$ \widetilde{K}(y,z) = (|z| |y|)^{-(n-1)} K(|y|,|z|),$$
where the factor $(|z| |y|)^{-(n-1)}$ is implicitly { dictated} by the H\"older scaling.
A natural question arises: which assumptions allow us to transport the ($L^{p_1}(\R) \times L^{p_2}(\R)\to L^p(\R)$)-boundedness of $T_K$ to a ($L^{p_1}(\R^n) \times L^{p_2}(\R^n)\to L^p(\R^n)$)-boundedness of $T_{\widetilde{K}}$ ? \\
That would correspond to the bilinear version of  results in \cite{CW}, where such a question is studied in the linear setting.

\bigskip

To answer such a question, it could be first interesting to see how this transformation $K\to \widetilde{K}$ acts on different classes of bilinear operators which are known to be bounded, such { as bilinear Calder\'on-Zygmund operators, and bilinear
multiplier operators whose symbols satisfy the H\"ormander or the Marcinkiewicz condition.}
It is obvious that the Calder\'on-Zygmund conditions on the kernel are not preserved by the transformation $K\to \widetilde{K}$.

Using the previous results, we can begin to give a positive answer in the setting of bilinear Marcinkiewicz operators. Let us first recall that a bilinear Fourier multiplier $T_K$ is called of  Marcinkiewicz type if its bilinear symbol $m$ satisfies the following regularity condition:
\begin{equation}
\sup_{\xi,\eta} \ |\xi|^{|\alpha|} |\eta|^{|\beta|} \left| \partial_\xi^\alpha \partial_\eta^\beta m(\xi,\eta) \right| \leq C_{\alpha,\beta},
\label{eq:mar}
\end{equation}
for every multi-indices $\alpha,\beta$.

\medskip

Then we have the following:
\begin{proposition} If $T_K$ is a bilinear Fourier multiplier on $\R$ of  Marcinkiewicz type then for every odd dimension $n\geq 3$, the bilinear operator $T_{\widetilde K}$ is also a bilinear Fourier multiplier of  Marcinkiewicz type on $\R^n$.
\end{proposition}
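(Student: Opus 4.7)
The plan is to derive an explicit integral representation of $\widetilde m$ as a weighted average of $m$ over a compact set, and then simply differentiate under the integral sign. Writing $n = 2k+1$ with $k \geq 1$ and noting that $\widetilde K$ is multiradial on $\R^n \times \R^n$, the symbol $\widetilde m = \widehat{\widetilde K}$ is also multiradial; put $\widetilde m(\xi,\eta) = \widetilde\mu(|\xi|,|\eta|)$. A standard chain-rule computation for derivatives of radial functions shows that the Marcinkiewicz condition \eqref{eq:mar} applied to $\widetilde m$ is implied by the ``pure radial'' bounds
\begin{equation*}
r_1^{k_1} r_2^{k_2}\bigl|\partial_{r_1}^{k_1}\partial_{r_2}^{k_2}\widetilde\mu(r_1,r_2)\bigr| \leq C_{k_1,k_2},\qquad r_1,r_2>0,\ k_1,k_2\ge 0,
\end{equation*}
so it suffices to establish these.

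The heart of the argument will be the representation formula
\begin{equation*}
\widetilde\mu(r_1,r_2) \;=\; c_k \int_0^1\!\!\int_0^1 (1-u_1^2)^{k-1}(1-u_2^2)^{k-1}\, m(r_1 u_1, r_2 u_2)\, du_1\, du_2,
\end{equation*}
which I would derive by writing $\widetilde m$ in polar coordinates on $\R^n\times\R^n$ exactly as in the proof of Theorem \ref{thm1}: the factor $(|y||z|)^{-(n-1)}$ built into $\widetilde K$ precisely cancels the $s_1^{n-1}s_2^{n-1}$ Jacobian and the Bessel-normalization powers, leaving
\begin{equation*}
\widetilde\mu(r_1,r_2) = (2\pi)^{n}\int_0^\infty\!\!\int_0^\infty \kappa(s_1,s_2)\,\widetilde J_\nu(2\pi s_1 r_1)\,\widetilde J_\nu(2\pi s_2 r_2)\, ds_1\, ds_2
\end{equation*}
with $\nu = (n-2)/2 = k-1/2$. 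Inserting the classical Poisson representation $\widetilde J_{k-1/2}(t) = \alpha_k \int_0^1 (1-u^2)^{k-1}\cos(tu)\, du$ for the half-integer-order Bessel function into each factor and swapping with the $ds_1\, ds_2$-integral yields, by bi-evenness of $\kappa$ and the identity $m = \widehat\kappa$, exactly $m(r_1u_1,r_2u_2)$ in the bracket.

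Once this representation is in hand, the conclusion is immediate. Differentiating under the integral sign and multiplying by $r_1^{k_1} r_2^{k_2}$ gives
\begin{equation*}
r_1^{k_1} r_2^{k_2}\partial_{r_1}^{k_1}\partial_{r_2}^{k_2}\widetilde\mu(r_1,r_2) = c_k\!\int_0^1\!\!\int_0^1\!(1-u_1^2)^{k-1}(1-u_2^2)^{k-1}(r_1 u_1)^{k_1}(r_2 u_2)^{k_2}(\partial_1^{k_1}\partial_2^{k_2}m)(r_1 u_1, r_2 u_2)\, du_1\, du_2,
\end{equation*}
and the Marcinkiewicz hypothesis on $m$ ensures that the integrand is pointwise dominated by $C_{k_1,k_2}(1-u_1^2)^{k-1}(1-u_2^2)^{k-1}$; since these weights are integrable on $[0,1]$, the desired uniform bound follows.

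The main obstacle I anticipate is justifying the polar-coordinate computation and the Fubini swap when $\kappa$ is only a tempered distribution rather than an absolutely integrable function, which is typical for a Marcinkiewicz symbol. The natural remedy is to first prove the representation for Schwartz $\kappa$ (where all integrals converge absolutely) and then extend by density and continuity in the tempered-distribution topology. The restriction to odd $n\geq 3$ enters exactly here: for $\nu = k-1/2$ the Poisson weight $(1-u^2)^{k-1}$ is a polynomial, so the representation kernel is smooth and compactly supported in $[0,1]^2$; for even $n$ one would have to deal with a fractional weight $(1-u^2)^{\nu-1/2}$ with $\nu-1/2\notin\mathbf Z$ and presumably combine this approach with the identities proved for even dimensions in Theorem \ref{thm1}.
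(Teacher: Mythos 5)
Your proof is correct in substance, but it is organized around a different key identity than the one the paper uses. The paper's argument passes through Theorem \ref{thm1}: it writes $\mathcal{F}_{1,1}((r_1r_2)^{-(n-1)}K)$ as the $(n-1)$-fold primitive $M^n$ of $m$ in each variable, applies the odd-dimensional recursion to express $\widetilde m$ as a combinatorial sum of terms $r_1^{-(2k-\ell_1)}r_2^{-(2k-\ell_2)}\partial_{r_1}^{\ell_1}\partial_{r_2}^{\ell_2}M^n$, and then checks that the Marcinkiewicz bounds on $m$ transfer to the shifted bounds $r_1^{\alpha-(n-1)}r_2^{\beta-(n-1)}|\partial^\alpha\partial^\beta M^n|\lesssim 1$ on the primitive. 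You instead bypass Theorem \ref{thm1} entirely and use the Poisson (Gegenbauer) integral representation of $\widetilde J_{k-1/2}$ to produce the single closed formula $\widetilde\mu(r_1,r_2)=c_k\int_0^1\int_0^1(1-u_1^2)^{k-1}(1-u_2^2)^{k-1}m(r_1u_1,r_2u_2)\,du_1du_2$, after which the estimate is a one-line domination because the weight is a bounded polynomial on $[0,1]$ and the Marcinkiewicz hypothesis controls $(r_iu_i)^{k_i}\partial_i^{k_i}m$ at the dilated point. The two routes are essentially dual to one another (substituting $t_i=r_iu_i$ and expanding $(r_i^2-t_i^2)^{k-1}$ in your formula recovers weighted primitives of $m$, which is what the paper's sum encodes), but yours buys a cleaner positive-kernel averaging statement and avoids both the primitives and the combinatorial sum; it also makes the role of the restriction to odd $n\geq 3$ transparent, since $\nu=k-1/2>-1/2$ is exactly what makes the Poisson weight a polynomial. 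Both arguments share the same unaddressed technical issue --- the kernel of a general Marcinkiewicz multiplier is only a tempered distribution, so the polar-coordinate and Fubini manipulations need a regularization/density justification --- and you at least name this gap and propose a remedy, whereas the paper is silent on it. One small point worth making explicit if you write this up: when differentiating under the integral sign, the domination of $u_1^{k_1}u_2^{k_2}(\partial_1^{k_1}\partial_2^{k_2}m)(r_1u_1,r_2u_2)$ as $u_i\to 0$ uses the Marcinkiewicz bound at the dilated point, which yields a majorant of size $r_1^{-k_1}r_2^{-k_2}$, locally integrable in $(u_1,u_2)$ for each fixed $r_1,r_2>0$, so the step is legitimate but not automatic.
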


\begin{proof} Let $\tilde{m}$ the bilinear symbol associated to $\widetilde{K}$. So
$$ \widetilde{m}(\xi,\eta) = \widehat{\widetilde{K}}(\xi,\eta)=\mathcal{F}_{n,n}((r_1r_2)^{-(n-1)}K)(|\xi|,|\eta|),$$
and we have (since $K$ is assumed to be multi-even)
$$ \mathcal{F}_{1,1}((r_1r_2)^{-(n-1)}K)(r_1,r_2) = M^n(r_1,r_2),$$
where $M^n$ is the $(n-1)^{th}$-primitive of the symbol $m$ (on each coordinate) given by
$$ M^n(r_1,r_2) = \left(\int_0^{r_1}\int_0^{t_{n-1}} \cdots \int_0^{t_{2}}\right) \left(\int_0^{r_2}\int_0^{s_{n-1}} \cdots \int_0^{s_2}\right) m(t_1,s_1) \, dt_{1} ... dt_{n-1} ds_{1} ... ds_{n-1}.$$
Applying Theorem 1.1, it comes that since $m$ satisfies the regularity property \eqref{eq:mar} in $\R$, then $\widetilde{m}$ satisfies the same in $\R^n$. \\
Indeed, Theorem 1.1 yields that $\widetilde{m}$ is a sum of terms of the form
$$ \frac{1}{|\xi|^{2k-\ell_1} |\eta|^{2k-\ell_2}} \frac{\partial^{\ell_1+\ell_2}}{\partial r_1^{\ell_1}\partial r_2^{\ell_2}} M^n(|\xi|,|\eta|).$$
However the regularity on $m$ implies the following estimates on $M^n$
$$
\sup_{r_1,r_2} \ r_1^{\alpha-(n-1)} r_2^{\beta-(n-1)} \left| \partial_{r_1}^\alpha \partial_{r_2}^\beta M^n(r_1,r_2) \right| \lesssim C_{\alpha,\beta},
$$
hence we deduce that $\widetilde{m}$ is of Marcinkiewicz type on $\R^n$.
\end{proof}

We refer the reader to \cite{GK} by the second author and Kalton, where they studied the boundedness of bilinear Marcinkiewicz-type Fourier multipliers. More precisely in \cite[Theorem 7.3]{GK}, a criterion is found to be almost equivalent to the boundedness from $L^{p_1} \times L^{p_2}$ into $L^p$ and it is surprising to see that this criterion does not depend on $p_1,p_2,p$.
It could be interesting to develop this approach and study if this criterion is preserved by our transformation $K \to \widetilde{K}$.

We also refer the reader to \cite{CW} where a similar result was proved in the linear case via a similar idea. A minor difference is that the following companion recurrence formula in \cite{G1} on page 425
$$
\frac{d}{d t}(t^{\nu}J_\nu(t))=t^\nu J_{\nu-1}(t)
$$
was used in the proof of \cite[Theorem 1.8]{CW}, which results in a recursion formula which is decreasing in the dimension.

\subsection{Examples}

The following facts are known; see for instance
Appendix C in \cite{S}.
For $a,b>0$ and $x,\xi\in\mathbf{R}^1$, the Fourier transform of
$$
f(x) = \begin{cases}
\dfrac{\cos(b\sqrt{a^2-x^2}\, )}{\sqrt{a^2-x^2}} & \qquad \textup{if $|x|<a$} \\
0 & \qquad \textup{if $|x|>a$}
\end{cases}
$$
is the function $\xi\mapsto \pi J_0(a\sqrt{b^2+4\pi^2\xi^2})$
and
the Fourier transform of
$$
g(x) = \begin{cases}
\dfrac{\cosh(b\sqrt{a^2-x^2}\, )}{\sqrt{a^2-x^2}} & \qquad \textup{if $|x|<a$} \\
0 & \qquad \textup{if $|x|>a$}
\end{cases}
$$
is
\begin{equation}\label{SnF2}
G(\xi) = \begin{cases}
\pi J_0(a\sqrt{4\pi^2\xi^2-b^2}) & \qquad \textup{if $2\pi|\xi|>b$} \\
\pi J_0(ai\sqrt{b^2-4\pi^2\xi^2}) & \qquad \textup{if $2\pi|\xi|<b$}.
\end{cases}
\end{equation}

Another useful formula is that    if $h(x)=\dfrac{\sin(b\sqrt{a^2+x^2}\, )}{\sqrt{a^2+x^2}}$, then
\begin{equation}\label{SnF3}
\widehat{h}(\xi) = \begin{cases}
\pi J_0(a\sqrt{b^2-4\pi^2\xi^2}) & \qquad \textup{if $|2\pi\xi|<b$} \\
0 & \qquad \textup{if $|2\pi\xi|>b$.}
\end{cases}
\end{equation}

We have the following examples:

\noindent \textbf{Example 1.}
On $\mathbf R^{2n}$ consider the function
$$
\Phi(x,y)=
\dfrac{\cos(\sqrt{4\pi^2-|x|^2}\sqrt{4\pi^2+|y|^2})}{\sqrt{4\pi^2- |x|^2}}\chi_{(0,2\pi)}(|x|)\chi_{(0,2\pi)}(|y|)
$$
Clearly $\Phi(x,y)=\phi(|x|,|y|)$ for some function $\phi$ on $\mathbf R^2$.
Obviously, $\Phi\in L^1(\mathbf{R}^{2n})$ for all $n\geq1.$ 

{    First, we fix $y\in\mathbf R^1$, and then using the first formula of the preceding facts we calculate that the Fourier transform of $\Phi$ associated with the first variable on $\mathbf R^1$ is 
$$
\widehat{\Phi_y}(\xi,y)=\pi J_0(2\pi\sqrt{4\pi^2+y^2+4\pi^2\xi^2})\chi_{(0,2\pi)}(|y|).$$

Second, applying the inverse version of the first formula and the convolution theorem of Fourier transforms, we get that the Fourier transform of $\Phi$ on $\mathbf R^2$ is}
$$
{F}_{1,1}(\Phi)(\xi,\eta)=\left\{\frac{\cos(4\pi^2\sqrt{1+|\xi|^2}\sqrt{1-|\cdot|^2}\,)}{\sqrt{1-|\cdot|^2}}\chi_{(0,1)}(|\cdot|)\right\}*\Bigg\{\dfrac{1}{|\cdot|}\sin(4\pi^2|\cdot|)\Bigg\}(\eta),
$$
where the convolution is in the one-dimensional dotted variable.
By an easy change of variables, we rewrite the preceding formula as
$$
\mathcal{F}_{1,1}(\phi)(r_1,r_2)=\left\{\frac{\cos(4\pi^2\sqrt{1+r_1^2}\sqrt{1-|\cdot|^2}\,)}{\sqrt{1-|\cdot|^2}}\chi_{(0,1)}(|\cdot|)\right\}*\Bigg\{\dfrac{1}{|\cdot|}\sin(4\pi^2|\cdot|)\Bigg\}(r_2),
$$
where $|\xi|=r_1$ and $|\eta|=r_2.$

{Note that
$$
(-\frac{1}{2\pi r_2}\frac{\partial}{\partial r_2})(-\frac{1}{2\pi r_1}\frac{\partial}{\partial r_1})
\left[\frac{\cos(4\pi^2\sqrt{1+r_1^2}\sqrt{1-r_2^2}\,)}{\sqrt{1-r_2^2}}\right]=\frac{4\pi^2\cos(4\pi^2\sqrt{1+r_1^2}\sqrt{1-r_2^2}\,)}{\sqrt{1-r_2^2}}.
$$
 
Finally} using \eqref{88} with $m=2, n=1$, after an algebraic manipulation  and in view of the identity $\frac{d}{dr}(f\ast g)(r)=( \frac{df}{dr}  \ast g)(r)$,  we obtain that on $\mathbf{R}^{3\times3}$ we have
$$
F_{3,3}(\Phi)(\xi,\eta)=\left\{\frac{4\pi^2\cos(4\pi^2\sqrt{1+|\xi|^2}\sqrt{1-|\cdot|^2}\,)}{\sqrt{1-|\cdot|^2}}\chi_{(0,1)}(|\cdot|)\,\textup{sgn}(\cdot)\right\}*\Bigg\{\dfrac{1}{|\cdot|}\sin(4\pi^2|\cdot|)\Bigg\}(|\eta|),
$$
where $\xi\in\mathbf{R}^3,\eta\in\mathbf{R}^3$ and the convolution is in the one-dimensional dotted variable.

Next we have an example in the case $n_1\neq n_2$.

\noindent  \textbf{Example 2.} For $x\in \mathbf R^2$ and $y\in \mathbf R$ set
$$
\Phi(x,y) = \begin{cases} \dfrac{\cosh(\sqrt{4\pi^2-| x|^2}\sqrt{4\pi^2-y^2})}{\sqrt{4\pi^2- | x|^2} } & \textup{when $| x|<2\pi$, $| y|<2\pi$,} \\
0&\qquad \textup{otherwise.}
\end{cases}
$$
Obviously, $\Phi\in L^1(\mathbf{R}^{n})$ for all $n\geq3 $  and $\Phi(x,y)$ has
the form $\phi(|x|,|y|)$ for some function $\phi$ on $\mathbf R^2$.

{ By the same argument as in Example 1, indeed making use of \eqref{SnF2}, \eqref{SnF3} and the inverse version of \eqref{SnF3} respectively, we obtain}
$$
\mathcal{F}_{2,1}(\phi)(r_1,r_2)=2\pi^2 \Big(J_0\Big(4\pi^2\sqrt{r_1^2-1}\sqrt{1-|\cdot|^2}\Big)\chi_{(0,1)}(|\cdot|)\Big)*\Big(\dfrac{1}{|\cdot|}\sin(4\pi^2|\cdot|)\Big)(r_2).
$$

Applying the identity $\frac{d}{dr}J_0(r)=-J_1(r),$ $\frac{d}{dr}J_1(r)=r^{-1}J_1(r)-J_2(r)$ from B.2 (1) in \cite{G1}, it follows from a small modification of  \eqref{88}
that   $F_{4,3}(\Phi)(\xi,\eta)$ is equal to
\begin{align*}
& \left\{\!\left(\!\frac{4\pi^2 J_1(4\pi^2\sqrt{|\xi|^2-1}\sqrt{1-|\cdot|^2}\,)}{\sqrt{|\xi|^2-1}\sqrt{1-|\cdot|^2}}-8\pi^4\sqrt{|\xi|^2-1}\, J_2(4\pi^2\sqrt{|\xi|^2-1}\sqrt{1-|\cdot|^2} )\!\right)\!  \chi_{(0,1)}(|\cdot|)\, \textup{sgn}(\cdot)\right\}\notag\\
&* \Bigg\{\dfrac{1}{|\cdot|}\sin(4\pi^2|\cdot|)\Bigg\}(|\eta|),
\end{align*}
on $\mathbf{R}^{4\times3}$ where $\xi\in\mathbf{R}^4,\eta\in\mathbf{R}^3.$ Again the convolution is one-dimensional.

The following example shows how to obtain the two-dimensional Fourier transform of a radial function whose corresponding
one-dimensional Fourier transform is
compactly supported.

\noindent \textbf{Example 3.} For $t\in\mathbf{R},$ consider the even function
$$
\phi(t) = \frac{\sin(2\pi\sqrt{1+t^2}\, )}{\sqrt{1+t^2}}
$$
and define a   square-integrable   function   on $\mathbf{R}^2$ by setting $\Phi(x) = \phi(|x|) $.
Applying \eqref{SnF3} we obtain
$$
\widehat{\phi}(\tau) = \pi J_0 \big(2\pi \sqrt{1-|\tau|^2} \, \big) \chi_{|\tau|<1}
$$
for $\tau\in \mathbf R$. Then we apply  \eqref{TBSoo99} to deduce that for $r\in [0,1)$ we have
$$
\mathcal{F}_2(\phi)(r)=  2\pi \int_r^1 \frac{d}{dt} J_0 \big(2\pi \sqrt{1-t^2}\, \big) \frac{dt}{\sqrt{t^2-r^2}} =
(2\pi)^2 \int_r^1   J_1 \big(2\pi \sqrt{1-t^2}\, \big)  \frac{ t}{\sqrt{1-t^2 }} \frac{dt}{\sqrt{t^2-r^2}}\, ,
$$
where the last identity is due to the fact that $J_0'=J_{-1} = -J_1$.  Setting $u=\sqrt{1-t^2}$ we rewrite the preceding integral as
$$
\mathcal{F}_2(\phi)(r) =
(2\pi)^2\int_0^{\sqrt{1-r^2}}   J_1 \big(2\pi   u \big)   \frac{du}{\sqrt{1-r^2-u^2}}
=
-(2\pi)^2   \int_0^{1}   J_{-1} \big(2\pi \sqrt{1-r^2}\, t \big)   \frac{dt}{\sqrt{1-t^2}} \, .
$$
Using the identity B.3   in \cite{G1} (with $\mu = -1$, $\nu=-1/2$\footnote{The identity is only stated for $\mu >-1/2$ but it is also valid for $\mu>-3/2$ by analytic continuation.}) the preceding expression is equal to
$$
 \Gamma(1/2) 2^{-1/2} \frac{J_{-1/2}\big(2\pi \sqrt{1-r^2}\,   \big) }{ \big(2\pi \sqrt{1-r^2}\,   \big)^{1/2}} =
  \frac{\cos \big(2\pi \sqrt{1-r^2}\,   \big) }{ 2\pi \sqrt{1-r^2} }\, .
$$
This  provides a formula for the two-dimensional Fourier transform $\widehat{\Phi}$ of $\Phi$ as a function of $r=|\xi|$ when $r\in [0,1)$. Notice that $\widehat{\Phi}(\xi)$ vanishes when $|\xi|\ge 1$.

\vspace{5 mm}

\end{document}